\newtheorem{theorem}{Theorem}[section]
\newtheorem{lemma}[theorem]{Lemma}
\newtheorem{corollary}[theorem]{Corollary}
\newtheorem{definition}[theorem]{Definition}
\newtheorem{rmk}[theorem]{Remark}
\DeclareMathOperator{\SL}{SL}
\DeclareMathOperator{\PSL}{PSL}
\newcommand{\R}{\mathbb{R}}
\newcommand{\hyp}{\mathbb{H}}
\newcommand{\Z}{\mathbb{Z}}
\newcommand{\C}{\mathbb{C}}
\newcommand{\Q}{\mathbb{Q}}
\newcommand{\F}{\mathcal{F}}
\newcommand{\SLOd}{\mathrm{PSL}(2, \mathcal{O}_d)}
\newcommand{\SLC}{\mathrm{SL}(2, \mathbb{C})}
\newcommand{\Od}{\mathcal{O}_d}
\newcommand{\Gd}{\Gamma_d}
\newcommand{\hip}{\mathbb{H}^3}
\newcommand{\hipd}{\mathbb{H}^2}
\newcommand{\FD}{\mathcal{F}}
\newcommand{\Ht}{\mathrm{H}}
\newcommand{\B}{\mathcal{B}}
\newcommand{\pd}{\mathcal{P}_d}
\newcommand{\Nd}{\mathrm{N}_d}
\newcommand{\Xd}{\mathrm{X}_d}
\newcommand{\Pd}{\mathbb{P}^1K_d}
\newcommand{\N}{\mathrm{N}}
\newcommand{\Ud}{\mathrm{U}_d}
\newcommand{\W}{\mathrm{W}}
\newcommand{\f}{\mathrm{F}}
\begin{document}
\title{Height estimates for Bianchi groups}

\author{Cayo D\'oria}\thanks{D\'oria is grateful for the support of FAPESP grant 2018/15750-9.}
\author{Gisele Teixeira Paula}\thanks{Paula was partially supported by Math-AmSud Project 88887.199703/2018-00 from CAPES}
\address{
Departamento de Matem\'atica Aplicada, IME-USP\\ Rua do Mat\~ao, 1010, Cidade Universit\'aria\\
05508-090, S\~ao Paulo SP, Brazil.}
\email{cayofelizardo@ime.usp.br}
\address{
Departamento de Matem\'atica, DMAT/CCE/UFES\\
Av. Fernando Ferrari, 514, Campus de Goiabeiras \\  29075-910, Vit\'oria ES, Brazil. 
}
\email{gisele.paula@ufes.br}

\begin{abstract}
We study the action of Bianchi groups on the hyperbolic $ 3- $space $ \hyp^3 $. Given the standard fundamental domain for this action and any point in $ \hyp^3 $, we show that there exists an element in the group which sends the given point into the fundamental domain such that its height is bounded by a quadratic function  on  the  coordinates of the point. This generalizes and establishes a sharp version of a similar result of Habegger and Pila for the action of the Modular group on the hyperbolic plane. Our main theorem can be applied in the reduction theory of binary Hermitian forms with entries in the ring of integers of quadratic imaginary fields. We also show that the asymptotic behavior of the number of elements in a fixed Bianchi group with height at most $T$ is biquadratic in $T$.

\end{abstract}

\maketitle
\section{Introduction}
The theory of lattices in the Lie group $\PSL(2,\mathbb{C})$ is a source of important problems in many branches of mathematics, e.g.  complex variables, dynamical systems, group theory, hyperbolic geometry and number theory. Among such lattices, the class of arithmetic groups attracts a special attention because of their remarkable properties. A hyperbolic $3$-orbifold is called arithmetic if it is the quotient of the hyperbolic $3$-space by an arithmetic lattice. We recall that the closed hyperbolic manifold of minimal volume, the Weeks manifold, and the cusped hyperbolic $3$-manifold of minimal volume are arithmetic manifolds. 

An important subclass of arithmetic lattices of $\PSL(2,\mathbb{C})$ is that of the \emph{Bianchi groups}. A basic fact about such groups is that any non-uniform arithmetic lattice of $\PSL(2,\mathbb{C})$ is commensurable to some Bianchi group \cite[Thm. 8.2.3]{MR03}. These groups have been studied since the seminal work of Bianchi \cite{bianchi} as the natural generalization of the modular group $\PSL(2,\mathbb{Z})$ and until today there are some deep open questions about them. For example, there exist explicit formulaes for the covolume of Bianchi groups, but it is not known whether the quotient of the covolumes of any two Bianchi groups is a rational number \cite{Borel}. Another important question \cite[Section 7.6]{elstrodt} is a particular case for a conjecture raised by Selberg which asks if the spectral gap in the discrete spectrum of any Bianchi orbifold is at least $1.$ 

For any lattice in $\PSL(2,\mathbb{C})$, there exists a measurable \emph{fundamental domain} in the hyperbolic $3$-space. The usefulness of a fundamental domain is not limited to giving the covolume. There is a lot of geometric and algebraic information which can be extracted from these sets. For example, finite presentation for any lattice can be obtained from its fundamental domain \cite[paragraph 4.7]{M15}. Although the existence of these sets is always guaranteed, there are two issues involving them. Firstly, it is not easy in general to find explicit fundamental domains for arbitrary lattices, and secondly, even if some fundamental domain is given, it may not be the most useful for particular applications. Nevertheless, we can approximate the  fundamental domain by \emph{fundamental sets}, which are measurable sets whose projection on the quotient space is surjective and finite-to-one.

This paper is part of a project aimed to describe fundamental sets for Bianchi groups with good geometric properties which are more convenient for some applications than the corresponding fundamental domains.

An example of application of fundamental sets was given in the proof of the Selberg conjecture for Bianchi orbifolds of low volume \cite{EGM89}. In the proof, two key properties are the facts that the fundamental set which they constructed is a product of a rectangle with a segment in the hyperbolic $3$-space and that it is  covered by few copies of an specific fundamental domain.  More generally, let $\mathcal{D}$ be a fundamental domain for a lattice $\Gamma$ and let $\Sigma \supset \mathcal{D}$ be a fundamental set. The set $\{ \gamma \in \Gamma \mid \gamma \mathcal{D} \cap \Sigma \neq \emptyset \}$ has  finite cardinality $N \geq 1$. We say that $\Sigma$ is a \textit{good} approximation of $\mathcal{D}$ if $N$ is \textit{small}.   

In the general theory of lattices in real points of semisimple Lie Groups, \emph{Siegel sets} are fundamental sets with a simple geometric description. On the other hand, in the paper of the second author \cite{gisele} it was shown that in general the approximation by Siegel sets may not be so good from the point of view of the discussion above. 

Inspired by the good geometric shape of Siegel sets we are interested in, the natural candidates for fundamental sets in $\hip$ are given by finite unions of sets of the form $h \Sigma$, where $h \in \PSL(2,\mathbb{C})$ and $\Sigma= R\times [t, +\infty)$ ,  $ R $ is a polygon in $ \C $ and $ t>0 $, both depending on the group. Since we need to count elements in the Bianchi group, there exists a useful tool in number theory, the  \emph{height} of a matrix with entries in a number field. For any real positive constant $L$, the number of elements in the Bianchi group of height at most $L$ is finite. The naive idea is therefore to estimate from above in terms of our fundamental set the height of any element $\gamma$ in the Bianchi group satisfying $\gamma \mathcal{D} \cap \Sigma \neq \emptyset$, where $\Sigma$ is our candidate for fundamental set and $\mathcal{D}$ is an explicit fundamental domain for the Bianchi group. After this, we need to be able to count the number of matrices with bounded height.

It is worth to mention that this idea of giving an upper bound for the height of matrices with the property of finite intersection for fundamental sets appeared in Orr's paper \cite{martinorr} motivated by a previous result of Habegger and Pila \cite{pila}. Our main result is a natural generalization of their height estimate for the similar problem in $ \hipd $.

This paper is organized as follows. In Section \ref{background} we define the main objects of study along this work. In Section \ref{sec3} we present the main result, where we give an estimate for the height of some group element sending a point in $ \hip $ into the fundamental domain of the Bianchi group in terms of special coordinates of this point. In Section \ref{reductionhforms} we give an application of the main theorem in the theory of reduction of binary Hermitian forms with coefficients in the ring of integers of a quadratic imaginary number field. For the sake of completeness, in Section \ref{counting} we prove an analogue of Duke, Rudinick and Sarnak's result \cite{sarnak} on giving the asymptotic growth of the cardinality of the set of matrices with height smaller than a given constant.

\section{Background}
\label{background}

\subsection{Geometry of the hyperbolic 3-space}	\label{geometryh3}

The hyperbolic 3-space $ \hip $ is the unique 3-dimensional connected and simply connected Riemannian manifold with constant sectional curvature equal to -1. We use the upper half-space model of $ \hip $,  which in its properties closely resembles the well-known upper half-plane  model $ \hipd $ of plane hyperbolic geometry. 
$$\hip = \{(z,t) \mid z = x+iy \in \C, t>0\}.$$

We can think of $ \hip $ as a subset of Hamilton's quaternions, by writing $ (z,t) = z+tj $, where $ z=x+iy $ and $ 1,i,j,k $ is the usual basis of the quaternion space.

The hyperbolic metric in $ \hip $ is $ds^2 = \frac{dx^2 + dy^2 +dt^2 }{t^2}.$ In the sense of Riemannian geometry this metric gives rise to 
the hyperbolic volume measure $ \nu$ with corresponding volume element 
$$d\nu = \frac{dxdydt}{t^3}.$$

Topology in $ \hip $  is induced from $ \R^3 $, but  geometry is hyperbolic. The geodesic lines are half-circles or half-lines in $ \hip $ which are orthogonal to the boundary plane $ \C $ in the Euclidean sense. The geodesic surfaces are Euclidean hemispheres or half-planes which again are orthogonal to the boundary $ \C $.

The group $ G = \mathrm{SL}(2,\C) $ acts by isometries on the upper half space $ \hip $. This action can be described as follows: if we represent a point $ P \in \hip $ as a  quaternion $ P= (z,t) = z+tj $, then the action of $ M  =  \left( \begin{array}{cc}
a & b  \\
c & d  \\
\end{array} \right) \in G $ on $ \hip $ is given by
\begin{equation}
\label{actionquaternions}
P\mapsto MP:= (aP+b)(cP+d)^{-1},
\end{equation}
where the inverse is taken in the skew field of quaternions. A direct  
computation in quaternionic coordinates shows that this defines an action of $ \mathrm{SL}(2, \C) $ on $ \hip $. More explicitly, Equation \eqref{actionquaternions} may be written in the form 
\begin{equation}
\label{actionh3}
M(z, t) = 
\displaystyle \Bigg( \frac{(az+b)(\bar{c}\bar{z}+\bar{d}) + a\bar{c}t^2}{\left|cz+d\right|^2 +\left|c\right|^2 t^2}, \frac{t}{\left|cz+d\right|^2 +\left|c\right|^2 t^2} \displaystyle \Bigg).
\end{equation}

The action of $ G $ on $ \hip $ extends to an action on $\hip \cup \partial \hip,$ where $ \partial \hip = \mathbb{P}^1 \C = \C \cup \{\infty\}$. On $\partial \hip$ the action may be described by a simple formula. We represent an element of $ \mathbb{P}^1 \C $ by $ (x: y) $, where $ (x, y) \in  \C^2-\{(0,0)\} $, and represent $ \infty \in \mathbb{P}^1 \C $  by  $(1:0) $. Then the action of $ M $ on $ \mathbb{P}^1 \C $ is given by
\begin{equation}
\label{boundary}
(x: y) \mapsto M(x:y) := (ax + by: cx + dy). 
\end{equation}

For any $M \in G$, the matrix $-M \in G$ and $M(P)=-M(P)$ for all $P \in \hip \cup \partial \hip$. Therefore we can consider the action of the group $\mathrm{PSL}(2,\C)=\SLC / \{I,-I\}$, where $I$ is the identity matrix. We will denote the class of a matrix $M \in \SLC$  in $\mathrm{PSL}(2,\mathbb{C})$ by $[M]$.

\subsection{Bianchi groups}
\label{bianchidefinition}

Given a squarefree integer $d>0$, consider the imaginary quadratic number field $K_d= \Q(\sqrt{-d})$ and let $\Od$ be the ring of integers of $K_d$, $$ \Od  = \Z[\omega]= \left\{a+b \omega \mid a,b \in \Z\right\}, $$ where 
 $\omega =  \left\{ \begin{array}{cc}
 \dfrac{-1 + \sqrt{-d}}{2}, & \mbox{ if } d \equiv 3 \mbox{ (mod } 4);  \\
 & \\
\sqrt{-d}, & \mbox{otherwise} .
\end{array} \right. $

\begin{definition}
	For each $ d $, we define the Bianchi group: $$\Gamma_d := \SLOd = \{M \in \SLC; M_{ij} \in \Od \} / \{ I, -I\}.$$ 
\end{definition}

\begin{definition}
	The  \textit{cusps} $C_{\Gd}$ of $\Gd$ are the elements $[x,y] \in \mathbb{P}^1\C$, for which the stabilizer in $\Gd$ of the action in Equation \eqref{boundary} contains a free abelian group of rank 2. 
\end{definition}

The set of cusps of the Bianchi group $ \Gamma_d $ is given by $C_{\Gd} = \mathbb{P}^1K_d \subset \mathbb{P}^1 \C$ as it can be seen in \cite[Proposition 2.2, Chapter 7]{elstrodt}.

The quotient of the induced action of $\Gd$ on $C_{\Gd}$ is a finite set, which has exactly $ h(d) $ elements, where $ h(d) $ is  the \textit{class number} of the field $ K_d $, i.e., the cardinality of the ideal class group $ \mathcal{I}_d  $ of $ K_d $ (see \cite[Thm 2.4., Chapter 7]{elstrodt}). 

\subsection{The fundamental domains for Bianchi groups}\label{fundamentaldomain}
In \cite[Section 7.3]{elstrodt} a complete geometric description of the construction of fundamental domains for the action of Bianchi groups on $\hip$ is given.

We define the sets
$$\B_d=\{(z,t) \in \hip \mid |cz+d|^2+|c|^2t^2 \geq 1  \mbox{ for all }  c, d \in \Od \, \, , \langle c, d \rangle = \Od \}$$ 
$$\mathcal{P}_1 = \left\{ x+iy  \mid |x| \leq \frac{1}{2}, \, \, 0 \leq y \leq \frac{1}{2}\right\} ,$$
$$ \scalebox{0.95}{ $\mathcal{P}_3=\left\{x+iy \mid 0 \leq x ,  \frac{x}{\sqrt{3}}  \leq y \leq \frac{(1-x)}{\sqrt{3}} \right\} \cup \left\lbrace x+iy \mid 0 \leq x \leq \frac{1}{2} \, \, , \, \, \frac{-x}{\sqrt{3}} \leq y \leq \frac{x}{\sqrt{3}} \right\rbrace $}.$$
For any $d \notin \{1,3\}$, we have
$$\begin{array}{lll}  \pd= \{x+iy \mid 0 \leq x \leq 1 \, , \, 0 \leq y \leq \sqrt{d} \}, & \mbox{if} & d \equiv 1,2 \, (\mbox{mod } 4),  \\[5pt] \pd=\{x+iy \mid 0 \leq x \leq 1 \, , \, 0 \leq y \leq \frac{\sqrt{d}}{2} \}, & \mbox{if} & d\equiv 3 \, (\mbox{mod } 4) . \end{array}$$

Note that  $ \SLOd_{\infty}= \{\gamma \in \SLOd \mid \gamma(\infty)=\infty \}$ acts by isometries in $\mathbb{C}$ and this action is cocompact. The sets $ \pd $ above  are fundamental domains for this induced action which contain the origin.

In analogy with the action of the modular group in the hyperbolic plane, we have the following theorem (see \cite{elstrodt}).

\begin{theorem} \label{fd} 		
	The set $\FD_d=\{(z,t) \in \B_d \mid z \in \pd\}$
	is a fundamental domain for the action of $\Gd$ on $\hip$.
\end{theorem}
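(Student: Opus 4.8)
The plan is to verify the two defining properties of a fundamental domain: that the $\Gd$-translates of $\FD_d$ cover $\hip$, and that no two interior points of $\FD_d$ lie in the same $\Gd$-orbit. The geometric engine behind both properties is the transformation rule for the height read off from Equation~\eqref{actionh3}: if $[M]\in\Gd$ has bottom row $(c,d)$, then the height of $M(z,t)$ equals $t/(|cz+d|^2+|c|^2t^2)$. Since a pair $(c,d)\in\Od\times\Od$ occurs as the bottom row of some matrix in $\SLOd$ exactly when $\langle c,d\rangle=\Od$, the set $\B_d$ is precisely the locus of points whose height is maximal in their $\Gd$-orbit, while $\SLOd_{\infty}$ (the case $c=0$) acts by height-preserving transformations of the $z$-coordinate.

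First I would prove surjectivity. Fix $P=(z,t)\in\hip$. The quantity $Q(c,d)=|cz+d|^2+|c|^2t^2$ is a positive-definite real quadratic form in $(c,d)\in\C^2\cong\R^4$, and $\Od\times\Od$ is a rank-$4$ lattice, so only finitely many coprime pairs satisfy $Q(c,d)\le C$ for any constant $C$. Hence the infimum of $Q$ over coprime pairs is attained, the supremum of the height over the orbit $\Gd P$ is attained, and any $\gamma_0$ realizing it sends $P$ into $\B_d$. Because $\B_d$ is the set of maximal-height points and $\SLOd_{\infty}$ preserves height, $\B_d$ is $\SLOd_{\infty}$-invariant; using that $\pd$ is a fundamental domain for the action of $\SLOd_{\infty}$ on $\C$, I can then post-compose $\gamma_0$ with an element of $\SLOd_{\infty}$ to move the $z$-coordinate into $\pd$ without leaving $\B_d$. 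The resulting point lies in $\FD_d$, so every orbit meets $\FD_d$.

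For injectivity, suppose $P$ and $Q=\gamma P$ both lie in $\mathrm{int}(\FD_d)$ for some $\gamma=[M]\in\Gd$ with bottom row $(c,d)$. If $c=0$, then $\gamma\in\SLOd_{\infty}$ preserves height and acts on the $z$-coordinates, which lie in $\pd$; since $\pd$ is a fundamental domain for $\SLOd_{\infty}$ on $\C$, their interior representatives cannot be identified unless $\gamma$ is trivial. If $c\neq0$, then membership of $P$ in $\B_d$ gives $Q(c,d)\ge1$, while applying the defining inequality of $\B_d$ at $Q$ to the bottom row $(-c,a)$ of $M^{-1}$ gives the reverse inequality; hence $Q(c,d)=1$. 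This places $P$ on the isometric sphere $|cz+d|^2+|c|^2t^2=1$, an active boundary surface of $\B_d$, contradicting $P\in\mathrm{int}(\FD_d)\subseteq\mathrm{int}(\B_d)$. Thus the interiors of the translates are disjoint, completing the verification.

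I expect the main obstacle to be the careful equality and boundary bookkeeping rather than the coarse structure: one must confirm that the topological interior of $\FD_d$ genuinely avoids every active isometric sphere (so that the case $c\neq0$ really yields a contradiction), and one must treat the exceptional units in $\Od^{\times}$ for $d=1,3$, where $\SLOd_{\infty}$ contains rotations as well as translations and the polygons $\mathcal{P}_1,\mathcal{P}_3$ are shaped accordingly. Verifying that $\pd$ is exactly a fundamental domain for $\SLOd_{\infty}$ acting on $\C$, including these rotations, is the step demanding the most attention.
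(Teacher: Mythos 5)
The paper does not prove this theorem at all: it is stated as a known result with a pointer to \cite{elstrodt}, so there is no internal argument to compare yours against. Your proposal is correct, and it is essentially the standard proof (the one carried out in \cite[Chapter 7]{elstrodt}); indeed it runs on exactly the mechanism the paper itself imports later, in the proof of Theorem \ref{mainthm}, via \cite[Chapter 7, Lemma 3.3]{elstrodt}. The key identifications you make are all valid: bottom rows of matrices in $\SLOd$ are precisely the pairs with $\langle c,d\rangle=\Od$; the $t$-coordinate transforms as $t\mapsto t/(|cz+d|^2+|c|^2t^2)$, so $\B_d$ is the locus of maximal height within an orbit; and the maximum is attained because $(c,d)\mapsto|cz+d|^2+|c|^2t^2$ is a positive definite real quadratic form on the rank-four lattice $\Od^2$, so its sublevel sets contain finitely many coprime pairs. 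Your disjointness step is also sound: evaluating the $\B_d$-inequality at $P$ on the bottom row of $M$ and at $\gamma P$ on the bottom row of $M^{-1}$ forces equality, and since $|cz+d|^2+|c|^2t^2$ is strictly increasing in $t$ when $c\neq 0$, a point on such a sphere is a limit of points outside $\B_d$ and hence cannot be interior.

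Three bookkeeping points you should fold in to make this airtight. First, your $c=0$ case needs the observation that the $z$-coordinate of an interior point of $\FD_d$ lies in the interior of $\pd$ (project a small ball); after that, the planar fundamental domain property gives $z=\gamma z$ and hence $P=\gamma P$, which is all that is required even if $\gamma$ were an elliptic element fixing $z$. Second, the whole proof leans on the assertion that $\pd$ is a fundamental domain for the planar action of $\SLOd_\infty$; the paper also states this without proof, and, as you acknowledge, the cases $d=1,3$, where $\Od^*$ has order greater than $2$ and $\SLOd_\infty$ contains rotations, are exactly where this requires genuine case-work. Third, if one adopts the definition of fundamental domain used in \cite{elstrodt}, which also demands that the boundary have measure zero, you should add the easy remark that $\partial\FD_d$ is contained in countably many smooth surfaces (vertical planes over $\partial\pd$ and the spheres $|cz+d|^2+|c|^2t^2=1$).
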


\section{Height estimates for Bianchi groups}
\label{sec3}

We refer to \cite{pila} for the description of the \textit{absolute exponential height} $\Ht(x)$ of an algebraic number $x$. In the special case when $x \in \mathcal{O}_d$, the \textit{height} of $ x $ is given by $\Ht(x) = |x|$, the complex norm of $ x $. For an element $ T \in \SL(2,\mathcal{O}_d) $, the height $\Ht(T)$ of $T$ is the maximum of the heights of its entries. If $[M] \in \Gd$ with $M \in \SL(2,\mathcal{O}_d)$, then we can define $\Ht([M]):=\Ht(M),$ since $\Ht(T)=\Ht(-T)$  for any $ T \in \SL(2,\mathcal{O}_d) $. We note that for $ A,B \in \Gd $, we have $  \Ht(AB)\leq 2\Ht(A)\Ht(B).$

In \cite{pila2}, Pila defines the notion of intricacy of a point of a symmetric space with respect to a fundamental domain of a lattice acting on the space. If we consider for each $ d $ as above, the fundamental domain $ \F _d $ of the Bianchi group, we can extend this notion  by defining the $ \Gamma_d -  $\emph{intricacy} of a point $ (z,t) \in \hip $ with respect to $ \F _d $ by 
$$ \mathrm{I}_{\F _d}(z,t) = \min\{\mathrm{H}(\sigma)\}, $$
where $ \sigma \in \Gamma_d $ runs through the set of elements in $\Gamma_d$ which take $ (z,t) $ into $ \F_d $.

Motivated by the analogue problem solved by Habegger and Pila in \cite{pila}  we define the function $  D(z,t) = \max\{1, |z|, t^{-1}\}  $ on $ \hip $, and prove our main theorem.

\begin{theorem}
	\label{mainthm}
	Let $d>0$ be a squarefree integer. There exists a  constant $c(d)>0$ such that  $$ \mathrm{I}_{\F _d}(z,t) \leq c(d) D(z,t)^2$$ for any $(z,t) \in \hip$. Furthermore, the exponent $2$ is sharp  and 
 $c(d) \leq C d$ for some universal constant $C>0$.
\end{theorem}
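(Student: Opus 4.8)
The plan is to prove the upper bound by exhibiting a single convenient $\sigma\in\Gd$ carrying $(z,t)$ into $\F_d$, since $\mathrm{I}_{\F_d}(z,t)$ is by definition a minimum over all such elements. I would produce $\sigma$ in two stages, mirroring the geometry of Section \ref{fundamentaldomain}. First, among all coprime pairs the quantity $|cz+d|^2+|c|^2t^2$ attains its minimum (only finitely many pairs make it $\le 1$, while $(0,1)$ already gives $1$), so there is $\sigma_0\in\Gd$ maximizing the height coordinate of $\sigma_0(z,t)$; by Equation \eqref{actionh3} its image lies in $\B_d$. Second, I compose with a translation (and, for $d\in\{1,3\}$, a unit rotation) $\tau\in\SLOd_{\infty}$ to move the complex coordinate into $\pd$. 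The product $\sigma=\tau\sigma_0$, with entries $a,b,c,d$, then sends $(z,t)$ to some $(z',t')\in\F_d$, and its bottom row $(c,d)$ is that of $\sigma_0$.

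The heart of the argument is a set of exact identities for the entries. Writing $N=|cz+d|^2+|c|^2t^2$, maximality of the height gives $t'\ge t$, i.e. $N=t/t'\le 1$; hence $|c|\le t^{-1}$ and $|cz+d|\le 1$, so that $|d|\le|c|\,|z|+1\le |z|t^{-1}+1$. For the top row I would read off from \eqref{actionh3} the relation $z'N=a\bigl(z\,\overline{cz+d}+\bar c\,t^2\bigr)+b\,\overline{cz+d}$ and combine it with $\det\sigma=ad-bc=1$; the resulting $2\times2$ linear system in $(a,b)$ has coefficient determinant exactly $-N$, and Cramer's rule yields
\[
a=cz'+\frac{\overline{cz+d}}{N},\qquad b=dz'-\frac{z\,\overline{cz+d}+\bar c\,t^2}{N}.
\]
Using $|cz+d|\le\sqrt N$, the elementary bounds $\tfrac{|cz+d|}{N}\le\tfrac1{\sqrt N}\le t^{-1}$, $\tfrac{|z|\,|cz+d|}{N}\le |z|t^{-1}$, $\tfrac{|c|t^2}{N}\le 1$, together with $|z'|\le\mathrm{diam}(\pd)\le 2\sqrt d$, bound every entry by a constant times $\sqrt d\,\bigl(|z|t^{-1}+t^{-1}+1\bigr)$. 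Since $|z|t^{-1}\le D(z,t)^2$ and $t^{-1},1\le D(z,t)^2$, I obtain $\Ht(\sigma)\le c(d)D(z,t)^2$ with $c(d)\le Cd$.

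The degenerate case $c=0$ (where $(z,t)$ already lies in $\B_d$ and $\sigma$ is a pure translation) is immediate: $|\lambda|\le|z|+\mathrm{diam}(\pd)\le D(z,t)+2\sqrt d$, well within the bound. The $\sqrt d$ throughout enters solely through the diameter of the translation domain $\pd$, which is $\asymp\sqrt d$, and this is what produces the linear dependence $c(d)\le Cd$. The main technical points in this half are organizing the two-stage element so that its bottom row is genuinely the height maximizer, and extracting the cancellation $\bar c(|z|^2+t^2)+\bar d z=z\,\overline{cz+d}+\bar c\,t^2$ that makes the identity for $b$ bounded; both are routine once set up.

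For sharpness I would build an explicit family. I take the complex coordinate $z$ to be a cusp $p/q\in K_d$ in lowest terms with $|q|$ large, translated by some $\lambda\in\Od$, and choose the height $t$ comparable to $|q|^{-2}$ so that $|z|$ and $t^{-1}$ are of the same order and $D(z,t)\to\infty$. Because the denominator $q$ of a cusp in lowest terms is an invariant of its boundary $\Gd$-orbit, any element sending $(z,t)$ to bounded height must have bottom row $(q,-(\lambda q+p))$ up to a unit; hence every reducing element has height at least $|\lambda q+p|$, and tuning $\lambda$ and $q$ makes this grow like a fixed power of $D(z,t)$. I expect the decisive difficulty to be exactly here: producing a superlinear lower bound is not the issue — the cusp-resolution mechanism already forbids any bound linear in $D(z,t)$ — the obstacle is to optimize the family (the sizes of $\lambda$, $q$, $t$ against the competing constraints $|c|\le t^{-1}$ and the fact, forced by $\det\sigma=1$, that the intermediate coordinate $z'=a_0/q$ stays bounded) so that the forced height matches the exponent of the upper bound, thereby confirming that $2$ is optimal.
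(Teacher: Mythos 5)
Your upper-bound argument is correct, and it takes a route that genuinely differs from the paper's. Both proofs share the same two-stage skeleton: first an element whose bottom row $(\gamma,\delta)$ minimizes $|\gamma z+\delta|^2+|\gamma|^2t^2$ over coprime pairs (so the image lies in $\B_d$, giving $|\gamma|\le t^{-1}$ and $|\delta|\le |z|t^{-1}+1$), then an element of $\SLOd_\infty$ moving the horizontal coordinate into $\pd$. The difference is the top row: the paper constructs it with the Bezout-type Lemma \ref{bezout}, obtaining entries bounded by $C_d|\gamma|$ and $C_d|\delta|$ with $C_d=O(\sqrt d)$, and then loses a further factor when multiplying the two stages, ending with $c(d)=16C_d^2=O(d)$. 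You instead recover the top row exactly by Cramer's rule — the coefficient determinant of your linear system is indeed $-N$ — and your elementary estimates are all valid: $|\gamma z+\delta|\le\sqrt N$; $N\ge t^2$ when $\gamma\neq 0$ because then $|\gamma|\ge 1$; and $|z'|\le \mathrm{diam}(\pd)=O(\sqrt d)$ since $\pd$ contains the origin. This bypasses Lemma \ref{bezout} entirely and yields $c(d)=O(\sqrt d)$, slightly sharper than the paper's $O(d)$.

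The sharpness half, however, has a genuine gap, and it is not merely the unfinished optimization you flag at the end: the family you propose can \emph{never} certify the exponent $2$, for a structural reason. Your points sit exactly above a cusp $\zeta$ with denominator $q$, at height $t\lesssim |q|^{-2}$. For such a point, an element whose bottom row is a unit multiple of $(q,-(\lambda q+p))$ sends it to $\bigl(\alpha/\gamma,\ 1/(|q|^2t)\bigr)$, and the crucial feature is that the horizontal image coordinate $\alpha/\gamma$ does not involve $t$ at all. Choosing the residue of $\alpha$ modulo $q$ to satisfy the determinant condition, and its representative so that $\alpha/\gamma$ lands in $\pd$, one produces an actual \emph{reducing} element with $|\alpha|\lesssim\sqrt d\,|q|$ and $|\beta|\lesssim\sqrt d\,|\lambda q+p|$. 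Hence on your family $\mathrm{I}_{\F_d}(z,t)\lesssim\sqrt d\,\max\{|q|,|\lambda|\,|q|\}$, whereas $D(z,t)\asymp\max\{|\lambda|,|q|^2\}$; writing $|\lambda|=|q|^{\kappa}$, the exponent ratio $\frac{1+\kappa}{\max\{\kappa,2\}}$ is maximized at $\kappa=2$ and never exceeds $3/2$. So your configuration can rule out any bound $c(d)D^{\kappa}$ with $\kappa<3/2$, but it cannot show that $2$ is sharp. What is missing is exactly the paper's configuration: the point must be displaced \emph{horizontally} from the cusp by $\asymp t$, and the cusp should have denominator $1$ but be far from the origin ($\zeta=n$, $z=n-\tfrac{1}{2n}$, $t=\tfrac{1}{2n}$). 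For such points, any matrix with bottom row $\pm(1,-n)$ sends them to horizontal coordinate $z'=\tfrac{1}{2t}-\alpha$; now the $1/t$ term is present, so $z'\in\pd$ forces $|\alpha|\asymp 1/t\asymp D(z,t)$, and $\det=1$ then forces $|\beta|=|\alpha n\pm 1|\asymp D(z,t)^2$. This is the mechanism that produces the quadratic lower bound, and it is absent from your family.

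Two further problems affect the lower-bound direction of your argument even for the exponent it does reach. First, the denominator of a cusp in lowest terms is \emph{not} an invariant of its $\Gd$-orbit (boundary orbits mix all denominators within an ideal class); what your mechanism actually needs — and what coprimality does give — is that any bottom row annihilating $\zeta$ is a unit multiple of $(q,-(\lambda q+p))$. Second, your forcing step tacitly assumes that every point of $\F_d$ has height bounded below, which holds only when $h(d)=1$: for class number $>1$ the domain $\F_d$ has cusps besides $\infty$ and contains points of arbitrarily small height, so a reducing element could a priori send $(z,t)$ near one of those cusps with no constraint on its bottom row. The paper sidesteps both issues, for every $d$, by exhibiting the explicit matrices $\sigma_n$ with $\sigma_n(z_n,t_n)=(0,n)$ and then perturbing the point into the interior of $\F_d$, where the reducing element is unique in $\Gd$ and therefore equals $\sigma_n$, of height $n^2-1\asymp D^2$.
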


Before proving Theorem \ref{mainthm} we will prove an auxiliary result.

\begin{lemma}
	\label{bezout}
	Let $ d>0 $ be a squarefree integer. Let $\alpha,\beta \in \Od\setminus \{0\}$. Suppose that $\langle \alpha, \beta \rangle = \Od$. There exist a constant $C_d \geq 1$ and $ x,y\in \mathcal{O}_d $ such that $$1 = \alpha x + \beta y \hspace{0.3cm} \mbox{with} \hspace{0.3cm} |x| \leq C_d | \beta| \, \, \mbox{and} \, \,  |y| \leq C_d |\alpha|.$$ 
\end{lemma}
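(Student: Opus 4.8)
The plan is to begin with an arbitrary Bézout relation and then replace its coefficients by nearby ones obtained by reducing modulo a principal ideal, exploiting that $\mathcal{O}_d$ is a lattice in $\mathbb{C}$. Since $\langle\alpha,\beta\rangle = \mathcal{O}_d$, I first fix any $x_0, y_0 \in \mathcal{O}_d$ with $\alpha x_0 + \beta y_0 = 1$. For every $m \in \mathcal{O}_d$ the pair $x = x_0 - m\beta$, $y = y_0 + m\alpha$ again satisfies $\alpha x + \beta y = 1$, as a direct substitution using commutativity shows. So the task is to choose $m$ making $x$ small; the value of $y$ is then determined and must be controlled a posteriori.

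To make $x$ small I reduce $x_0$ modulo $\beta\mathcal{O}_d$ by rounding. Since $1$ and $\omega$ form an $\mathbb{R}$-basis of $\mathbb{C}$, write $x_0/\beta = a + b\omega$ with $a, b \in \mathbb{R}$, let $a_0, b_0 \in \mathbb{Z}$ be nearest integers, and set $m = a_0 + b_0\omega \in \mathcal{O}_d$. Then $x = x_0 - m\beta = \beta\big((a - a_0) + (b-b_0)\omega\big)$, whence
$$ |x| \leq |\beta|\left(|a-a_0| + |b-b_0|\,|\omega|\right) \leq \tfrac12\,|\beta|\,(1 + |\omega|). $$
Setting $\rho_d := \tfrac12(1 + |\omega|)$, this gives $|x| \leq \rho_d|\beta|$ with $x \in \mathcal{O}_d$. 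Because $|\omega| = \sqrt d$ or $|\omega| = \tfrac12\sqrt{1+d}$ according to the congruence class of $d$ modulo $4$, in all cases $\rho_d = O(\sqrt d)$, which is the source of the factor $d$ appearing after squaring in Theorem \ref{mainthm}.

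It remains to bound $y = (1 - \alpha x)/\beta$. Here I use that any nonzero $\gamma \in \mathcal{O}_d$ has $|\gamma|^2 = \N(\gamma) \in \mathbb{Z}_{\geq 1}$, so $|\alpha|, |\beta| \geq 1$. Then
$$ |y| = \frac{|1 - \alpha x|}{|\beta|} \leq \frac{1 + |\alpha|\,|x|}{|\beta|} \leq \frac{1}{|\beta|} + |\alpha|\rho_d \leq 1 + |\alpha|\rho_d \leq (1 + \rho_d)|\alpha|. $$
Taking $C_d := 1 + \rho_d \geq 1$ gives simultaneously $|x| \leq \rho_d|\beta| \leq C_d|\beta|$ and $|y| \leq C_d|\alpha|$, completing the argument. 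The only point requiring genuine care is this last balancing: the rounding controls $x$ directly but leaves $y$ with an extra additive term, and it is exactly the elementary bounds $|\alpha|, |\beta| \geq 1$ that absorb it into a single constant $C_d$. I do not anticipate a serious obstacle beyond keeping the two arithmetic cases for $\omega$ straight in the estimate for $\rho_d$.
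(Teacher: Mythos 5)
Your proof is correct and follows essentially the same route as the paper: start from an arbitrary B\'ezout relation, shift it by $(x_0-\lambda\beta,\,y_0+\lambda\alpha)$ with $\lambda\in\mathcal{O}_d$ chosen to reduce $x_0/\beta$ modulo the lattice $\mathcal{O}_d$, and then bound $y$ a posteriori using $|\alpha|,|\beta|\geq 1$. The only cosmetic difference is that you realize the lattice-reduction constant by explicit nearest-integer rounding in the basis $\{1,\omega\}$ (giving $\rho_d=\tfrac12(1+|\omega|)$), whereas the paper phrases it via the diameter $\varepsilon_d$ of the torus $\mathbb{C}/\mathcal{O}_d$; both yield $C_d=O(\sqrt d)$.
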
 
\begin{proof}
	For each $d$ we let $\f_d$ be the canonical fundamental domain of $\mathcal{O}_d$ as a lattice of $\mathbb{C}$, which is the parallelogram generated by $1$ and $\omega$.

 Let $\varepsilon_d = \mbox{diam}(\C/\Od)$ be the diameter of the torus $\C / \Od$. If  $(x_0,y_0) \in \mathcal{O}_d^2$ is a solution of the equation $1=\alpha x_0 + \beta y_0,$ then for any $\lambda \in \mathcal{O}_d$ the pair $(x_\lambda,y_\lambda)=(x_0-\lambda \beta, y_0+\lambda \alpha)$ is another solution.
	
	Consider now the complex number $\frac{x_0}{\beta}$. There must exist $z_0 \in \f_d$ and $\lambda_0 \in \mathcal{O}_d$ such that $\frac{x_0}{\beta}=z_0 + \lambda_0$ and  also  $\mu \in \Od$ with $|z_0-\mu| \leq \varepsilon_d$. Therefore,  $\lambda=\lambda_0+\mu \in \mathcal{O}_d$ satisfies $\left|\frac{x_0}{\beta}-\lambda\right| \leq \varepsilon_d$, which is equivalent to $|x_0-\lambda \beta| \leq \varepsilon_d |\beta|.$
	
	For this $\lambda$, the pair $(x_\lambda,y_\lambda)$ satisfies  $1=\alpha x_{\lambda} + \beta y_{\lambda}$ and the following inequalities hold: 
	$$|x_\lambda| \leq \varepsilon_d |\beta| \, \, \mbox{and} \,\, |y_\lambda|=\frac{|1-\alpha x_\lambda|}{|\beta|} \leq 1+|\alpha|\varepsilon_d \leq (1+\varepsilon_d) |\alpha|.$$ 
	The lemma is then proven if we take $C_d=1+\varepsilon_d.$
\end{proof}

\begin{rmk}
Note that $\varepsilon_d \leq \mathrm{diam}(\f_d)$ for all $d$. Hence, there exists a universal constant $c_1 >0$ such that $C_d \leq c_1 \sqrt{d}$ for all $d$.
\end{rmk}

We can now prove our main theorem.

\begin{proof}[Proof of Theorem \ref{mainthm}]
	We fix $d>0$. Our goal is to give information about some element of $\Gd$ which sends an arbitrary point $(z,t) \in \hip$ into the fundamental domain $\FD_d$. By \cite[Lemma 3.3]{elstrodt}, this can be done in two steps. In the first step, we need to construct  $\tau \in \SLOd$ such that  $\tau (z,t)=(z',t') \in \B_d$. After it, we use the invariance of the action of $\SLOd_\infty$ in $\B_d$ in order to construct $\sigma \in \SLOd_\infty$ such that $\sigma \tau (z,t)=\sigma(z',t')=(z'',t')$ with $(z'',t') \in \FD_d$. 

	We first give a bound for the height of $\tau$ in terms of $(z,t)$. Define 
	$$\mu'_d(z,t) = \displaystyle \min_{\langle \gamma, \delta \rangle = \Od} \frac{|\gamma z + \delta|^2 + |\gamma|^2t^2}{t}.$$
	By \cite[Lemma 3.3 - Chapter $7$]{elstrodt}, an element  $\tau = \left[ \begin{array}{cc} \alpha_0 & \beta_0 \\ \gamma_0 & \delta_0 \end{array}\right] $  of  $\SLOd$ satisfies $\tau (z,t)=(z',t') \in \B_d$ if and only if
	\begin{equation}\label{eqsol}
	\mu_d'(z,t)t=|\gamma_0 z + \delta_0|^2 + |\gamma_0|^2t^2.
	\end{equation}
    Consider a pair $(\gamma_0,\delta_0) \in \Od^2$ with $\langle \gamma_0, \delta_0 \rangle = \Od$ satisfying Equation \eqref{eqsol}.
	
	If $\gamma_0=0$ or $\delta_0=0$, then either $\delta_0$ or $\gamma_0$ must be a unit of $\Od$. It is then easy to find some $\tau =\left[ \begin{array}{cc}  \alpha_0 & \beta_0 \\ \gamma_0 & \delta_0                                                                                                                   \end{array} \right] \in \SLOd$ with $\Ht(\tau)=1$ and $\tau (z,t) \in \B_d$. In this case we have the trivial inequality $\Ht(\tau) \leq D(z,t)$.
	
	Suppose now that $\gamma_0, \delta_0 \in \Od \setminus \{0\}$. By Lemma \ref{bezout}, there exist a constant $C_d$ and integers $\alpha_0,\beta_0 \in \Od$ such that $\tau = \left[ \begin{array}{cc}  \alpha_0 & \beta_0 \\ \gamma_0 & \delta_0                                                                                                                  
	\end{array} \right] \in \SLOd$
	satisfies $|\alpha_0| \leq C_d|\gamma_0|$ and $|\beta_0| \leq C_d|\delta_0|$, i.e.  $$\Ht(\tau) \leq C_d\max\{|\gamma_0|,|\delta_0|\}.$$
	
	By construction, $\tau (z,t)=(z',t') \in \B_d$ with $$t'=\frac{t}{|\gamma_0z+\delta_0|^2 + |\gamma_0|^2t^2}=\frac{1}{\mu'_d(z,t)} \geq t.$$ Hence $t\mu'_d(z,t) \leq 1$ and then $$\mu'_d(z,t) \leq t^{-1}.$$

	Moreover, from the definition of $ D(z,t) $ and from Equation \eqref{eqsol}, we obtain
	\begin{equation}
	\label{C13420.1}
		\Ht(\gamma_0) = |\gamma_0| \leq \sqrt{\frac{\mu'_d(z,t)}{t}}\leq t^{-1} 
		\leq D(z,t) 
	\end{equation}
		and
	\begin{equation}
% 	\label{rev1}
			\Ht(\delta_0)  \leq |\gamma_0z|+|\gamma_0z+\delta_0| \leq |\gamma_0||z|+1 \leq  2\Ht(\gamma_0)D(z,t).
	\end{equation}
	
	Therefore,
	\begin{equation}
	\label{mtx}
	  \Ht(\gamma_0) \leq  \Ht(\tau) \leq 2C_d \Ht(\gamma_0)D(z,t).
	\end{equation}
	
	We now estimate the norm of $$ z'= \frac{(\alpha_0 z+\beta_0)(\bar{\gamma_0}\bar{z}+\bar{\delta_0}) + \alpha_0 \bar{\gamma_0}t^2}{\left|\gamma_0z + \delta_0 \right|^2 +\left|\gamma_0\right|^2 t^2}.$$
	
	We note first that
\begin{align*}
 |\alpha_0z+\beta_0||\gamma_0 z+\delta_0| & \leq \left|\alpha_0 \left( z+\frac{\delta_0}{\gamma_0} \right) - \frac{1}{\gamma_0}\right||\gamma_0z+\delta_0|  \\
  & \leq \frac{|\alpha_0|}{|\gamma_0|}|\gamma_0z+\delta_0|^2 + \frac{|\gamma_0z+\delta_0|}{|\gamma_0|} & \\ 
  & \leq \frac{|\alpha_0|}{|\gamma_0|}|\gamma_0z+\delta_0|^2 + |\gamma_0z+\delta_0| .
\end{align*}

We have $ |\alpha_0| \leq C_d|\gamma_0|$ and therefore,
	\begin{align*}
	|z'| &  \leq \frac{|\alpha_0||\gamma_0z+\delta_0|^2}{|\gamma_0||\gamma_0z+\delta_0|^2}+ \frac{|\gamma_0z+\delta_0|}{|\gamma_0z+\delta_0|^2+ |\gamma_0|^2t^2} + \frac{|\alpha_0|}{|\gamma_0|} \\
	      & \leq 2C_d + \frac{1}{2\Ht(\gamma_0)t}.  
	\end{align*}
We observe that we have used in the last inequality the fact that for every real numbers $a,b$ with $b>0$ it holds that $\frac{a}{a^2+b^2} \leq \frac{1}{2b}$, as $a^2+b^2 \geq 2ab$. 

By \eqref{C13420.1} and by noticing that $C_d \geq 1,$ we obtain $2C_d \le \dfrac{2C_d}{\Ht(\gamma_0)t}$.
Thus,
\begin{align}\label{normzprime}
|z'| \leq  3C_d\frac{1}{\Ht(\gamma_0)t} \leq   3C_d\frac{D(z,t)}{\Ht(\gamma_0)}.
\end{align}

	If $z'$ is in $\pd$, then the proof of first part finishes here, by using \eqref{C13420.1} and \eqref{mtx}. Otherwise, if we repeat the argument of the proof of Lemma \ref{bezout} we can show that there exists 
	$\sigma_{z'} \in \SLOd_\infty$ with $\sigma_{z'}(z') \in \pd$ and 
	\begin{equation} 
	\label{ineqb}
	\Ht(\sigma_{z'}) \leq |z'|+ C_d.
	\end{equation}

	Therefore, the following inequality follows from \eqref{C13420.1}, \eqref{mtx}, \eqref{normzprime}, \eqref{ineqb}, and from the fact that $\Ht(\sigma_{z'}\tau)\leq 2\Ht(\sigma_{z'})\Ht(\tau)$:

\[\mathrm{I}_{\F _d}(z,t) \leq 16C_d^2 D(z,t)^{2}.\]
	
If we set $c(d)=16C_d^2$, the first part of the theorem is proved for any $d$. Moreover, we already know that $C_d \leq c_1\sqrt{d}$  for a universal constant $c_1>0$. Hence, $c(d) \leq Cd$, where $C>0$ is a universal constant.

In order to finish the proof, we show that exponent $2$ is sharp. Indeed, for every integer $ n\geq 2 $, we take the sequence of points $(z_n, t_n) := (\frac{2n^2-1}{2n}, \frac{1}{2n}) \in \hip $. It is straightforward to check that $ \sigma_n := \left[ \begin{array}{cc}
	n & 1-n^2\\
	-1 & n
	\end{array} \right] \in  \Gamma_d$  and satisfy
	$$  \sigma_n(z_n, t_n) = (0,n) \in \FD_d, $$
	for every $ d $. 
	We have $ D(z_n, t_n) = 2n $ and  $  \Ht(\sigma_n) = n^2-1 $.
	 
	Note that $\mathrm{I}_{\F _d}(z_n,t_n) $ is not necessarily equal to $H(\sigma_n)$. However, for each $n$, by continuity of $\log(D(z,t))$, there is an open set $U_n\subset \hip$ containing $(z_n,t_n)$ so that for any $(w,s)\!\in  U_n$, it holds that $|\log(D(w,s))-\log(D(z_n,t_n)|  <\log2.$
	Since $\sigma_n^{-1}$ is continuous in $(0,n)$ and $(z_n,t_n)=\sigma_n^{-1}(0,n)$, there exists an open neighborhood $V_n$ of $(0,n)$ such that if $(z,t) \in V_n$, then $\sigma_n^{-1}(z,t) \in U_n$. Therefore if we take $(\epsilon_n,n) \in V_n \cap \mathrm{int}(\F_d)$ and define $(z'_n,t'_n)=\sigma_n^{-1}(\epsilon_n,n)$  we conclude that $(z'_n,t'_n) \in U_n$ and thus  satisfies  $ 2 \ge \frac{D(z'_n,t'_n)}{D(z_n, t_n)} \ge \frac{1}{2}$ , with  $\mathrm{I}_{\F _d}(z'_n,t'_n)  = n^2-1$. Thus we get
	\[ \mathrm{I}_{\F _d}(z'_n,t'_n)  \geq \frac{1}{8} D(z_n, t_n)^2 \ge \frac{1}{32} D(z'_n,t'_n)^2 \mbox{ for all } n  \mbox{ and } D(z'_n, t'_n) \to \infty.\]
\end{proof}

\begin{rmk}
	If $ h(d) = 1 $, then there exists an optimal constant $ t_d>0 $ such that the set  $ \Sigma_d  = \pd \times [t_d, + \infty) $ contains the fundamental domain $ \FD_d $ and  \begin{equation*}
	     \{ \gamma \in \Gamma_d \mid \gamma \FD_d \cap \Sigma_d \neq \emptyset \} \subset \{\gamma \in \Gamma_d \mid \Ht(\gamma) \leq c(d)s(d)^2 \}, 
	\end{equation*}   
	where $ s(d) $ is the maximum value of the function $ D(z,t) $ in $ \Sigma_d $.
	
	We observe that the latter set is finite and this proves that $ \Sigma_d $ is a fundamental set for $ \Gamma_d $. We show in Section \ref{counting} how to estimate the cardinality of this finite set. 
\end{rmk}

\section{On the reduction theory of binary Hermitian forms}
\label{reductionhforms}

We start this section with a brief description of binary Hermitian forms. We are interested in the Reduction of binary Hermitian forms over $ \Od $. For a more complete description of this subject, the reader can refer to \cite{beshaj}.

For any subring $ R $ of $ \C $, we can consider the set of Hermitian matrices $$  H(R) = \{A \in M_2(R) \mid  A^* = A \} . $$

Each $ f = \left( \begin{array}{cc}
a & b \\ 
\bar{b} & d\\
\end{array} \right) \in H(R) $ defines a Hermitian form in  $ \C $ by the following
$$ f(X,Z) = aX\bar{X} +2\Re(bX\bar{Z}) +dZ\bar{Z}, $$
and the discriminant of $ f $ is defined by $ \Delta(f) = ad-|b|^2 $.

If $ f(X,Z) >0 $ for any $ (X,Z) \in \C \times\C \backslash \{(0,0)\} $, then $ f $ is said to be \textit{positive definite}. 
We observe that if $ a\neq 0 $, then $ f(X,Z) = a\Big( |X +\frac{bZ}{a}|^2 + \frac{\Delta}{a^2}|Z|^2\Big) $. Hence we conclude that $f$ is positive definite if and only if $ a>0 $ and $ \Delta>0 $.

Let $ H^+(R) $ denote the set of positive definite Hermitian forms and consider its quotient by the action of $ \R^{>0} $ by scalar multiplication, which we denote by $ \tilde{H}^+(R) = H^+(R)/\R^{>0} $.
The following map gives a bijection between $ \tilde{H}^+(\C) $ and the hyperbolic $ 3- $space $ \hip $:
\begin{eqnarray*}
	\xi:\tilde{H}^+(\C)& \to & \hip \\
	f  & \mapsto& \Big(-\frac{b}{a}, \frac{\sqrt{\Delta(f)}}{a}\Big).
\end{eqnarray*}

This map is a bijection, with inverse map given by 
$$ \xi^{-1}(z,t) =  \left\lbrace \lambda
\left(
\begin{array}{cc} 1 & -z\\
-\bar{z} & \, \, \,|z|^2+t^2
\end{array}\right)
\mid \, \lambda \in \R^{>0} \right\rbrace $$
and it is also $ \mathrm{SL}(2,\C)-$equivariant.  In other words, if we define the action of $ \mathrm{SL}(2,\C) $ on $ H^+(\C) $ by $ \rho(g)f (X,Z) = f(g^{-1}(X,Z))$,  then $ \xi(\rho(g)f) = g\xi(f)$, for every $ g \in \mathrm{SL}(2,\C) $ and every $ f \in  H^+(\C) $.

For $ R= \Od $, we say that $ f \in H^+(\Od) $ is  a reduced form if $ \xi(f) \in \mathcal{F}_d $, the fundamental domain for the Bianchi group $ \Gamma_d $.

The height of a positive definite binary Hermitian form is given by 
$$ \Ht(f) = \max\{a, |b|, d\}. $$
For any $\Delta >0 $ and any subring $R \subset \C$, we define $H^+(R, \Delta)=\{ f \in H^+(R) \mid \Delta(f)=\Delta \}.$

\begin{lemma} Let $ D = D(z,t) $ be the function defined in Section \ref{sec3}. 
	If $ f  \in H^+(R, \Delta) $, then
	$$ D(\xi(f)) \leq \frac{\Ht(f)}{\sqrt{\Delta}}. $$
\end{lemma}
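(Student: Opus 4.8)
The plan is to bound each of the three quantities that appear in $D(\xi(f)) = \max\{1, |z|, t^{-1}\}$ in terms of $\Ht(f)/\sqrt{\Delta}$, where $(z,t) = \xi(f) = (-b/a, \sqrt{\Delta}/a)$. Since $f \in H^+(R,\Delta)$ is positive definite, we have $a > 0$ and $\Delta = ad - |b|^2 > 0$, and in particular $\Ht(f) = \max\{a, |b|, d\} \geq a > 0$, so all the ratios below make sense.

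First I would handle $t^{-1}$. Directly from the formula, $t^{-1} = a/\sqrt{\Delta} \leq \Ht(f)/\sqrt{\Delta}$, since $a \leq \Ht(f)$. Next I would handle $|z| = |b|/a$. The goal is $|b|/a \leq \Ht(f)/\sqrt{\Delta}$, equivalently $|b|\sqrt{\Delta} \leq a\,\Ht(f)$. Using $\Delta = ad - |b|^2 \leq ad$ gives $\sqrt{\Delta} \leq \sqrt{ad}$, so it suffices to show $|b|\sqrt{ad} \leq a\,\Ht(f)$, i.e. $|b|\sqrt{d} \leq \sqrt{a}\,\Ht(f)$. This can be seen by splitting on whether $|b| \leq \sqrt{ad}$ or not, but a cleaner route is to note $|b|^2 = ad - \Delta \leq ad \leq \Ht(f)^2$, so $|b| \leq \Ht(f)$; combined with $\sqrt{\Delta} \leq \sqrt{ad} \leq \max\{a,d\} \leq \Ht(f)$ one gets $|z|\sqrt{\Delta} = (|b|/a)\sqrt{\Delta} \leq |b|/a \cdot \Ht(f)$, and since $\sqrt{\Delta} \leq a$ is \emph{not} generally true this needs care. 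The slickest argument: $|z|^2 \Delta = (|b|^2/a^2)(ad - |b|^2) \leq (|b|^2/a^2)(ad) = |b|^2 d/a \leq \Ht(f)^2 \cdot \Ht(f)/a \le \Ht(f)^2$ provided $d \le a$; when $a \le d$ one instead estimates $|b|^2 d/a$ using $|b|^2 \le ad$ to get $|b|^2 d/a \le d^2 \le \Ht(f)^2$. Either way $|z|\sqrt{\Delta} \leq \Ht(f)$, as desired.

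Finally, for the constant term I would observe that $1 \leq \Ht(f)/\sqrt{\Delta}$ follows because $\Delta = ad - |b|^2 \leq ad \leq \Ht(f)^2$, hence $\sqrt{\Delta} \leq \Ht(f)$. Taking the maximum of the three bounds yields $D(\xi(f)) = \max\{1, |z|, t^{-1}\} \leq \Ht(f)/\sqrt{\Delta}$.

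The only genuinely delicate point is the $|z|$ bound, since the naive chain of inequalities does not immediately close: one must exploit the identity $|b|^2 = ad - \Delta$ together with the discriminant bound $\Delta \le ad$ and split according to whether $a \le d$ or $d \le a$, rather than bounding $|b|$, $a$, and $\sqrt{\Delta}$ independently. Everything else is a one-line consequence of $\sqrt{\Delta} \le \Ht(f)$ and the explicit form of $\xi$.
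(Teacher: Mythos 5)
Your proof is correct, but it is organized genuinely differently from the paper's. The paper cases on which entry of $f$ realizes $\Ht(f)=\max\{a,|b|,d\}$: it rules out $\Ht(f)=|b|$ (positive definiteness forces $|b|^2<ad$), observes that when $\Ht(f)=a$ one in fact gets \emph{equality} $D(\xi(f))=a/\sqrt{\Delta}=\Ht(f)/\sqrt{\Delta}$, and in the remaining case $\Ht(f)=d$ splits into $|b|\le a\le d$ and $a\le |b|\le d$; in the last, hardest, subcase the needed inequality $|b|\sqrt{\Delta}/a\le d$, i.e.\ $|b|\sqrt{\Delta}<ad$, is extracted from the strict positivity of the quadratic $x\mapsto x^2-|b|^2x+|b|^4$ evaluated at $x=ad$. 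You instead bound each of the three terms of $D(\xi(f))=\max\{1,|z|,t^{-1}\}$ by $\Ht(f)/\sqrt{\Delta}$: the terms $1$ and $t^{-1}=a/\sqrt{\Delta}$ are immediate from $\sqrt{\Delta}\le\sqrt{ad}\le\Ht(f)$ and $a\le\Ht(f)$, and the delicate term $|z|=|b|/a$ you handle by squaring, $|z|^2\Delta\le |b|^2d/a$, then splitting on $a\le d$ versus $d\le a$. Unwound, your estimate for the hard term amounts to $|b|^2\Delta\le |b|^2\,ad\le (ad)^2$, i.e.\ two applications of the discriminant bound $|b|^2\le ad$, so your route is more elementary than the paper's quadratic-polynomial trick and never needs to identify which entry is largest; what the paper's case analysis buys in exchange is finer information (exact equality of $D(\xi(f))$ and $\Ht(f)/\sqrt{\Delta}$ when $a$ is maximal, impossibility of $|b|$ being maximal, strictness of $|b|\sqrt{\Delta}<ad$), none of which is needed for the stated lemma. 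One small expository point: in your case $d\le a$ the chain $|b|^2d/a\le \Ht(f)^2\cdot\Ht(f)/a\le \Ht(f)^2$ silently uses $\Ht(f)\le a$; this is true, since $d\le a$ and $|b|^2<ad\le a^2$ force $\Ht(f)=a$, but it deserves a word --- or simply replace it by the cleaner $|b|^2d/a\le |b|^2\le \Ht(f)^2$, using only $d/a\le 1$.
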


\begin{proof}
	Consider $f= \left( \begin{array}{cc}    a & b \\ \bar{b} & d \end{array}\right) \in H^+(R, \Delta)$ and $D(\xi(f))=\max\{1,\frac{|b|}{a}, \frac{a}{\sqrt{\Delta}} \}$. 	We look at the following cases: 
	\begin{itemize}
		\item $ \Ht(f) = |b| $;\\
		This case cannot happen, because then we would have $ a\leq |b| $ and $ d\leq |b| $, what would imply $ ad\leq |b|^2 $ and then $ \Delta \leq 0 $, which is a contradiction.
		
		\item $ \Ht(f) = a $;\\
		Then we would get $ D(\xi(f)) = \max\{1, \frac{a}{\sqrt{\Delta}} \} $, as $ \frac{|b|}{a} \leq 1 $. From $ d\leq a $, we obtain 
		$$ ad\leq a^2 \leq a^2+|b|^2 \Rightarrow \Delta \leq a^2, $$
		and thus $$  D(\xi(f)) = \frac{a}{\sqrt{\Delta}} = \frac{\Ht(f)}{\sqrt{\Delta}}. $$
		
		\item $\Ht(f)=d$;\\
		In this case we look separately at the following options: 
		
		If $ |b| \leq a \leq d $, we get that $ D(\xi(f)) =\max\{1, \frac{a}{\sqrt{\Delta}}\}$.
		We have then $ \frac{a}{\sqrt{\Delta}} \leq  \frac{\Ht(f)}{\sqrt{\Delta}} $. On the other hand, $ d^2 \geq ad \geq ad-|b|^2 = \Delta $. Hence $ 1\leq  \frac{d}{\sqrt{\Delta}} = \frac{\Ht(f)}{\sqrt{\Delta}} $ and we conclude that $ D(\xi(f)) \leq \frac{\Ht(f)}{\sqrt{\Delta}} $ as stated.
		
		It remains to consider $ a \leq |b| \leq d $, which is equivalent to showing the inequality $$\max \left\{ \sqrt{\Delta}, \frac{|b|\sqrt{\Delta}}{a},a \right\} \leq d. $$
		
		By our assumption, we have $$\sqrt{\Delta} \leq \sqrt{ad} \leq d \hspace{0.4cm} \mbox{and} \hspace{0.4cm} a \leq d.$$
		
		We finish the proof if we show that $|b|\sqrt{\Delta} < ad$. Note that the real quadratic function $f(x)=x^2-|b|^2x+|b|^4$ is strictly positive. In particular, $$f(ad)=(ad)^2-|b|^2(ad)+|b|^4 > 0 \Leftrightarrow |b|\sqrt{\Delta} < ad.$$

	\end{itemize}

\end{proof}

As a consequence of our main result and lemma above, we get the following
\begin{corollary} Let $ d $ be a squarefree integer. Given a positive integer $ \Delta > 0$ and $ f \in H^+(\Od, \Delta), $ there exists  $ g \in  \mathrm{SL}(2,\Od) $ such that $ \rho(g)f$ is a reduced form and which satisfies
	$$  \Ht(g) \leq c(d)D(\xi(f))^{2} \leq \frac{c(d)\Ht(f)^{2}}{\Delta},$$
	where $ c(d) $ is the constant in Theorem \ref{mainthm}.
\end{corollary}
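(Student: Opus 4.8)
The plan is to recognize the Corollary as a direct transcription of Theorem \ref{mainthm} through the equivariant dictionary furnished by the map $\xi$. First I would set $(z,t) = \xi(f) \in \hip$ and apply Theorem \ref{mainthm} to this point. Since $\mathrm{I}_{\F_d}(z,t)$ is defined as a minimum over the height, it is realized by some element $\sigma \in \Gamma_d$ which carries $(z,t)$ into $\FD_d$ and satisfies
\[ \Ht(\sigma) = \mathrm{I}_{\F_d}(z,t) \leq c(d)\,D(z,t)^2 = c(d)\,D(\xi(f))^2. \]

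The next step is to pass from the class $\sigma \in \Gamma_d = \SL(2,\Od)/\{I,-I\}$ to an honest matrix. I would select any lift $g \in \SL(2,\Od)$ of $\sigma$. Because $\Ht(M) = \Ht(-M)$, the height is insensitive to the choice of lift, so $\Ht(g) = \Ht(\sigma)$; and because $M$ and $-M$ induce the same isometry of $\hip$, the matrix $g$ acts on $\hip$ exactly as $\sigma$ does. I would then invoke the $\SL(2,\C)$-equivariance of $\xi$, namely $\xi(\rho(g)f) = g\,\xi(f)$. Combined with the preceding step this yields $\xi(\rho(g)f) = g\cdot(z,t) = \sigma\cdot(z,t) \in \FD_d$, which is precisely the definition of $\rho(g)f$ being a reduced form. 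This produces the required $g$ together with the first inequality $\Ht(g) \leq c(d)\,D(\xi(f))^2$.

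To obtain the second inequality I would simply feed $f \in H^+(\Od,\Delta)$ into the Lemma proved above, which gives $D(\xi(f)) \leq \Ht(f)/\sqrt{\Delta}$; squaring and multiplying by $c(d)$ yields $c(d)\,D(\xi(f))^2 \leq c(d)\,\Ht(f)^2/\Delta$, completing the chain of inequalities. I do not anticipate a genuine obstacle here, as every ingredient is already established; the only point deserving care is checking that the chosen lift $g$ acts on $\hip$ identically to $\sigma$, so that the statement $\sigma(z,t) \in \FD_d$ translates faithfully into the reducedness of $\rho(g)f$ via the equivariance of $\xi$.
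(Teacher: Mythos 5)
Your proof is correct and follows exactly the route the paper intends: the corollary is stated there as an immediate consequence of Theorem \ref{mainthm} applied to $\xi(f)$, the $\SL(2,\C)$-equivariance of $\xi$, and the preceding lemma, which is precisely your chain of reasoning (including the harmless point about lifting from $\Gamma_d$ to $\SL(2,\Od)$). The only detail you leave tacit is that $\rho(g)f$ again lies in $H^+(\Od,\Delta)$ (entries stay in $\Od$ and the discriminant is preserved since $\det g = 1$), which is needed for ``reduced form'' to apply but is immediate.
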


\section{Counting elements with bounded height in Bianchi groups.}
\label{counting}

In what follows, the relation $f(t) \precsim g(t)$ for two positive functions $f,g$ means that  $\displaystyle \limsup_{t \to \infty} \frac{f(t)}{g(t)} \leq 1$. We write $f(t) \sim g(t)$ if  $f(t) \precsim g(t)$ and $g(t) \precsim f(t)$.

Given a linear algebraic group $G$ defined over $\mathbb{Z}$, a natural problem is to investigate the asymptotics as $T \to \infty$ of $$\# \{A \in G(R) ~|~ \Ht(A) \leq T\},$$ 
where $R$ is the ring of integers of a number field. In \cite{sarnak} this problem is studied in a more general context but they consider only the ring $R=\mathbb{Z}$. As an application of their main result, the authors consider  $G=\mathrm{SL}_n$ [op. cit., Example 1.6] and give an explicit constant $c(n)>0$ such that $$ \# \{ A \in \mathrm{SL}(n,\mathbb{Z}) \mid H(A) \leq T\} \sim c(n)T^{n^2-n}. $$ 

We are interested in this problem for $G=\mathrm{SL}_2$ and $R$ being the ring of integers of an imaginary quadratic extension of $\mathbb{Q}$. In fact, we will count elements in $\mathrm{PSL}_2$, this is not a matter since the number of matrices of given height in $\mathrm{SL}(2,R)$ is twice the number of elements in $\mathrm{PSL}(2,R)$ with the same height.

For any Bianchi group $\Gamma_d$ and positive number $T>0$ we consider the set $\W_d(T)=\{A \in \Gamma_d \mid \Ht(A) \leq T \}$.
We define the function $$\Nd(T)= \# \W_d(T).$$

It is known that $\Nd(T) < \infty$ for any $T>0$. Now we want to show that this function grows polynomially with degree $4$. 

Before proving the main result of this section, we recall a classical result of Schanuel \cite{sch}. Let $\mathcal{I}_d$ be the ideal class group  of $K_d$. The \emph{class} $[P]$ of a point $P=(x:y) \in \Pd$ is the class of the fractional ideal $\langle x, y \rangle$ in $\mathcal{I}_d$. Up to normalization, the definition of the height of $P$ is given by $$\Ht(P):=\frac{\max\{|x|,|y|\}}{\sqrt{\N(\langle x, y \rangle)}}.$$

Let $$\Xd(T)=\{ P \in \Pd \mid[P]=[(1:0)] \mbox{ and }  \Ht(P) \leq T\}.$$

It follows from \cite[Theorem 1]{sch} that the cardinality of $\Xd(T)$ has the following growth:

\begin{equation} \label{growx}
\# \Xd(T) \sim \tau(d)T^4,
\end{equation}
for some constant $\tau(d)>0$ which depends only on $d$. 
\begin{rmk}
    We observe that the height in \cite{sch} is normalized to be given by $\Ht(x) = |x|^2$ and this is why the exponent 4 appears instead of 2, as Theorem 1 therein suggests.
\end{rmk}

Consider the map $\phi_d: \Gamma_d \rightarrow \Pd$ given by $$\phi_d \left[ \begin{array}{cc}
\alpha & \beta \\ \gamma & \delta                                                                                              \end{array} \right] = (\alpha : \gamma).$$

Note that $[\phi_d(\tau)]=[(1:0)]$ for any $\tau \in \Gamma_d$ since $\langle \alpha, \gamma \rangle = \Od$. 
% If we consider the action of $\Gamma_d$ on $\Pd$, then $\phi_d(A)=A(1:0)$.
% Hence, $\phi_d(A)=\phi_d(B)$ if, and only if, $B^{-1}A \in \SLOd_\infty$. 
Moreover, the map $\phi_d$ satisfies
\begin{equation} \label{C10420.1}
 \Ht(\phi_d(\tau)) \leq \Ht(\tau) \mbox{ for all } \tau \in \Gd.
\end{equation}

Consider the group $S=\mathbb{Z}/2\mathbb{Z} \times \mathbb{Z}/2\mathbb{Z}$ and let $\iota=(1,0),\kappa=(0,1)$ be the canonical generators of $S$. We define an action of $S$ in $\Gd$ by $\iota \cdot \tau = \tau^{-1} $ and $\kappa \cdot \tau=\tau'$ for any $\tau \in \Gd$, where $\tau'$ denotes the class of any transpose of a matrix which represents $\tau$. Since inversion and transposition commutes and since they are involutions, we have a well defined action of $S$ on $\Gd$. Note that 
 \[\Ht(s \cdot \tau) = \Ht(\tau) \mbox{ for any } s \in S \mbox{ and } \tau \in \Gd.  \] 

Note that for any $\tau \in \Gd$, there exists $s \in S$ such that the height of $s \cdot \tau$ is realised at the first column of any representative of $\tau$, i.e. $\Ht(s \cdot \tau)=\Ht(\phi_d(s \cdot \tau))$. Thus, for each $S$-orbit in $\Gd$ we can choose some element where the inequality \eqref{C10420.1} is an equality.

For each $T>0$ define $\tilde{W}_d(T)=\{ \tau \in W_d(T) \mid \Ht(\tau)=\Ht(\phi_d(\tau))\},$ and denote its cardinality by $\tilde{N}_d(T)$. Note that $W_d(T)$ is $S$-invariant for any $T>0.$ By paragraph above, if we decompose $W_d(T)$ in $S$ orbits, then for any orbit we can choose an element in $\tilde{W}_d(T)$. Since each orbit has at most $4$ elements we conclude that 
\begin{equation} \label{C10420.2}
 N_d(T) \leq 4 \tilde{N}_d(T) \mbox{ for any } T>0.
 \end{equation}

By fixing this  notation, we can now prove the main theorem in this section.

\begin{theorem}
	Let $d>0$ be a squarefree integer. There exist constants $0< l_1(d) \leq l_2(d)$ such that $$l_1(d)T^4 \precsim
	\Nd(T) \precsim l_2(d)T^4,$$ 
	for all sufficiently large $T$.
\end{theorem}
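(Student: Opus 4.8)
The plan is to squeeze $N_d(T)$ between constant multiples (depending on $d$) of $\#X_d(T)$ and then invoke Schanuel's asymptotic \eqref{growx}, namely $\#X_d(T)\sim\tau(d)T^4$. The bridge between the two quantities is the map $\phi_d$ sending a matrix to its first column: by construction $\phi_d$ lands in the single ideal class $[(1:0)]$ counted by $X_d$, and the content of the proof is that the fibers of $\phi_d$ are controlled both in cardinality and in height.

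For the lower bound I would exhibit, for each non-degenerate $P\in X_d(T/C_d)$, at least one preimage in $W_d(T)$. Write $P=(\alpha:\gamma)$ with $\langle\alpha,\gamma\rangle=\Od$ and $\alpha\gamma\neq0$, so that $\Ht(P)=\max\{|\alpha|,|\gamma|\}$. Applying Lemma \ref{bezout} to the coprime pair $(\alpha,\gamma)$ produces $\delta,\beta\in\Od$ with $\alpha\delta-\gamma\beta=1$, $|\delta|\leq C_d|\gamma|$ and $|\beta|\leq C_d|\alpha|$; the resulting matrix $\tau_P\in\Gamma_d$ satisfies $\phi_d(\tau_P)=P$ and, since $C_d\geq1$, $\Ht(\tau_P)\leq C_d\,\Ht(P)\leq T$. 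Hence $P\mapsto\tau_P$ injects the non-degenerate part of $X_d(T/C_d)$ into $W_d(T)$, because distinct projective points have distinct images under $\phi_d$. Since only the two points $(1:0),(0:1)$ of $X_d$ are degenerate (an $O(1)$ correction), this yields $N_d(T)\geq\#X_d(T/C_d)-O(1)\sim\tau(d)C_d^{-4}T^4$, so one may take $l_1(d)=\tau(d)/C_d^4$.

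For the upper bound I would first pass to $\tilde W_d(T)$ using \eqref{C10420.2}, reducing the count to matrices whose height is realized on the first column. If $\tau\in\tilde W_d(T)$ then $\Ht(\tau)=\Ht(\phi_d(\tau))=\max\{|\alpha|,|\gamma|\}=:m\leq T$, so $\phi_d$ maps $\tilde W_d(T)$ into $X_d(T)$, and it suffices to bound its fibers uniformly. Fixing a coprime representative $(\alpha,\gamma)$ of $P$, any two completions to a matrix of determinant $1$ differ by $(\beta,\delta)\mapsto(\beta+\lambda\alpha,\delta+\lambda\gamma)$ with $\lambda\in\Od$ (this uses $\langle\alpha,\gamma\rangle=\Od$). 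Assuming without loss that $|\alpha|=m$, the constraint $|\beta+\lambda\alpha|\leq m$ confines $\lambda$ to a disk of radius $m/|\alpha|=1$, which contains at most a constant number $c_d$ of points of the lattice $\Od$; allowing for the $|\Od^{\times}|=O(1)$ choices of coprime representative, each fiber has $O_d(1)$ elements. Therefore $\tilde N_d(T)\leq O_d(1)\,\#X_d(T)$, and \eqref{C10420.2} gives $N_d(T)\precsim l_2(d)T^4$ with $l_2(d)=4\,|\Od^{\times}|\,c_d\,\tau(d)$.

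The step I expect to be the main obstacle is the uniform fiber bound in the upper estimate: one must verify that, once a coprime first column is fixed, the residual freedom in the matrix is \emph{exactly} a lattice coset indexed by $\Od$, and that the height cutoff restricts the lattice parameter to a region of bounded area independent of $T$. The remaining points are routine bookkeeping, harmless to the asymptotics: in $\tilde W_d(T)$ the degenerate columns $\alpha\gamma=0$ force $\Ht(\phi_d(\tau))=1$ and hence contribute only an $O(1)$ set, and the finite unit group $\Od^{\times}$ merely accounts for the ambiguity in choosing a coprime representative of a projective point. Combining the two bounds with \eqref{growx} then gives the theorem.
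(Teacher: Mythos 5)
Your proposal is correct and follows essentially the same route as the paper: the first-column map $\phi_d$ to $\mathbb{P}^1K_d$, the lower bound via the Bezout-type Lemma \ref{bezout} giving a height-controlled right inverse, the upper bound via the reduction \eqref{C10420.2} to $\tilde{W}_d(T)$ together with a uniform bound on the fibers of $\phi_d$ (your ``disk of radius $1$'' for the lattice parameter $\lambda$ is the same constraint the paper extracts as $|\mu|\leq 2$, up to the unit ambiguity you both account for), and finally Schanuel's asymptotic \eqref{growx}. The only cosmetic differences are that the paper completes the degenerate points $(1:0),(0:1)$ explicitly instead of discarding them with an $O(1)$ correction, and organizes the fiber count level-by-level in the height $\sqrt{m}$ rather than all at once.
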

\begin{proof}
	We fix an arbitrary squarefree integer $d>0$.  	Let $\Ud=\{ P \in \Pd \, |\, \, [P]=[(1:0)] \}$. We  have already observed that the image of  $\phi_d$ is in $\Ud$. Now we define a right inverse for $\phi_d$. For each $P \in \Ud$ there exist $\alpha, \gamma \in \Od$ such that $\langle \alpha, \gamma \rangle = \Od$ and $P=(\alpha:\gamma)$. Indeed, if $P=\left(\frac{p}{q}:\frac{r}{s}\right),$ then $$\left\langle \frac{p}{q},\frac{r}{s} \right\rangle = \frac{u}{v} \Od,$$ 
	where $p,r \in \Od$ and $q,s,u,v \in \Od \setminus \{0\}$.  If we define $\alpha'=pvs, \gamma'=rqv, \xi=uqs $ we get $P=(\alpha':\gamma')$ and $\langle \alpha', \gamma' \rangle = \xi \Od.$ Finally, by writting $\alpha=\frac{\alpha'}{\xi}$ and $\gamma=\frac{\gamma'}{\xi}$, we get that $\alpha,\gamma \in \Od$, $P=(\alpha : \gamma)$ and $\langle \alpha, \gamma \rangle = \Od.$
	
	Consider $P=(\alpha:\gamma) \in \Ud$. If $\gamma=0$  we define $\psi_d(P)=\mathrm{I} \in \Gamma_d$, if $\alpha=0$ we define $\psi_d(P)=\left[ \begin{array}{cc} 0 & -1 \\ 1 & 0 \end{array} \right] \in \Gamma_d$. If $\alpha, \gamma \in \Od \setminus \{0\}$ we can use Lemma \ref{bezout} in order to obtain a constant $C_d \geq 1$ and elements $\beta, \delta \in \Od$ with $|\beta| \leq C_d|\alpha|, |\delta| \leq C_d|\gamma|$ and such that $\psi_d(P)=\left[ \begin{array}{cc} \alpha & \beta \\ \gamma & \delta \end{array} \right] \in \SLOd$.
	
	Therefore,
	$$\begin{array}{lllll} \phi_d (\psi_d (P)) & = & P & \mbox{for all} & P \in \Ud.  \\[5pt] \Ht(\psi_d(P)) & \leq & C_d \Ht(P) & \mbox{for all} & P \in \Ud. \end{array}$$
	
	Hence, for any $T>0$ the restriction $\psi_d^T$ of $\psi_d$ to $\Xd \left(\frac{T}{C_d}\right)$ is an injective map whose image is contained in  $\W_d(T)$. Thus, $$\#\Xd \left(\frac{T}{C_d}\right) \leq \Nd(T), \, \, \mbox{for all} \, \, T>0.$$ 
	
	By \eqref{growx}, if we define $l_1(d)=\frac{\tau(d)}{C_d^4}$, then $l_1(d)T^4 \precsim \Nd(T).$
	
In order to give an upper bound for $N_d(T)$ we will first estimate $\tilde{N}_d(T)$ from above. 
For each $m \in \mathbb{N},$ let $\phi_{d,m}$ be the restriction of $\phi_d$ to the set $\mathrm{Dom}(\phi_{d,m})=\{ \tau \in \Gd \mid  \Ht(\tau)=\Ht(\phi_d(\tau))=\sqrt{m}\}$. Note that 
\begin{equation}
\label{ntil}
    \tilde{N}_d(T)=\sum_{m \le T^2} \# \mathrm{Dom}(\phi_{d,m}).
\end{equation}

We now show that for any $m,d$ and $\tau \in \mathrm{Dom}(\phi_{d,m})$, the  cardinality of the set $\{\tau' \in \mathrm{Dom}(\phi_{d,m}) \mid \phi_{d,m}(\tau')=\phi_{d,m}(\tau)\}$ is bounded by a universal constant. 

If $\phi_{d,m}(\tau)=\phi_{d,m}(\tau')$, then  $\tau^{-1}\tau' \in \SLOd_\infty.$ 

Consider matrices $A=\begin{pmatrix} \alpha & \beta \\
\gamma & \delta \end{pmatrix}, A'=\begin{pmatrix} \alpha' & \beta' \\ \gamma' & \delta' \end{pmatrix} \in \mathrm{SL}(2,\Od)$ such that $\tau=[A], \tau'=[A']$. Then there exist $\lambda \in \Od^*$  and $\mu \in \Od$ such that $A'=A \begin{pmatrix} \lambda & \mu \\
0 & \lambda^{-1} \end{pmatrix}$ and we can write
\begin{align}\label{C11420.1}
    \beta'=\alpha\mu+\beta\lambda^{-1} \mbox{ and } \delta'=\gamma\mu+\delta\lambda^{-1}.
\end{align}
Since $\Ht(A)=\Ht(\tau)=\Ht(\phi_d(\tau))=\max\{|\alpha|,|\gamma|\}$, we have two possibilities, either $|\alpha|=\sqrt{m}$ or $|\gamma|=\sqrt{m}$. If $|\alpha|=\sqrt{m},$ then we get $|\mu|\leq 2$. Indeed,
\begin{equation*}
    \sqrt{m} |\mu| =  |\alpha\mu| \leq |\beta'|+|\beta\lambda^{-1}| \leq  2\sqrt{m},
\end{equation*}
since  $|\beta|,|\beta'|\leq \sqrt{m} \mbox{ and } |\lambda^{-1}|=1.$
The same holds if $|\gamma|=\sqrt{m}$.

Hence there are at most $\frac{1}{2}(\#\Od^*)( \#\{u \in \Od \mid |u| \leq 2 \})$ possibilities for $\tau^{-1}\tau'$. Note that for any  $d$, we have $\#\Od^* \leq 6$ and $ \#\{u \in \Od \mid |u| \leq 2 \} \leq c_1$ , where $c_1>0$ is a universal constant.

By the definition of the domain of $\phi_{d,m}$, its image is contained in the set $\{P \in \Ud \mid \Ht(P)=\sqrt{m}\}$. Then for any $m \geq 1$ we have
\[\# \mathrm{Dom}(\phi_{d,m}) \leq 3c_1 \# \{P \in \Ud \mid \Ht(P)=\sqrt{m}\}.\]

Hence for any $T \ge 1$ we get by \eqref{ntil} 
\[\tilde{N}_d(T)  \leq 3c_1 \sum_{m \leq T^2}  \# \{P \in \Ud \mid \Ht(P)=\sqrt{m}\} = 3c_1\# X_d(T).         \]

Therefore, by \eqref{growx}, \eqref{C10420.2} and the inequality above we obtain \[\Nd(T) \precsim 12c_1\tau(d)T^4.\]
We finish the proof by observing that the constant $l_2(d)=12c_1\tau(d)$ depends only on $d$.

\end{proof}

\medskip

\noindent\textbf{Acknowledgments.}
We would like to thank Mikhail Belolipetsky for his encouragement and interest in this work. We also thank the referee for numerous comments and suggestions, including pointing out gaps and  possible improvements on our main result. We are grateful to  \textit{Universidade de S\~ao Paulo} and \textit{Universidade Federal do  Esp\'irito Santo} for the support and hospitality.
Besides, the second author acknowledges the University of Lille for receiving her in her postdoctoral stage and for the good interchange of ideas that she had there.

\end{document}